\newtheorem{lemma}{Lemma}[section]
\newtheorem{theorem}[lemma]{Theorem}
\newtheorem{definition}[lemma]{Definition}
\numberwithin{equation}{section}
\title{\textsf{2-local derivations and biderivations of $\frak{sl}(2)$ on all simple modules} \footnote{ supported by the NSF of Heilongjiang
Province (YQ2020A005) and the NSF of China (12061029).}}
\author{\textsc{Shujuan Wang$^{1}$},
  \textsc{ Zhaoxin Li$^{2}$}\footnote{Correspondence:  LZX15765737480@163.com }
\;and \textsc{Xiaomin Tang$^{2}$}
  \\
 {\small \textit{$^1$Department of Mathematics, Shanghai Maritime University,}}\\
\small \textit{Shanghai 201306, China} \\
 \small\textit{$^2$School of Mathematical Sciences, Heilongjiang University,}\\ \small\textit{Harbin 150080, China} }
\date{ }
\begin{document}
\maketitle
\begin{quotation}
\small\noindent \textbf{Abstract}:
This paper generalizes
the concepts of 2-local derivations and biderivations (without  the skewsymmetric condition) of a finite-dimensional Lie algebra from the adjoint module to any finite-dimensional module,
and  determines all 2-local derivations and biderivations
of the 3-dimensional complex simple Lie algebra $\frak{sl}(2)$ on its any finite-dimensional simple module.

\vspace{0.2cm} \noindent{\textbf{Keywords}}: 2-local derivations,  biderivations, $\frak{sl}(2)$, simple modules

\vspace{0.2cm} \noindent{\textbf{Mathematics Subject Classification}}: 17B05, 17B10, 17B45
\end{quotation}

\setcounter{section}{0}
\section{ Introduction}

It has been proved that derivations and generalized derivations of algebras are  influential and far-reaching,
which  are very important subjects in the study of both
algebras and their generalizations.
The concepts of 2-local derivations and biderivations of Lie algebras (on the adjoint module)
are first introduced in 2015 and 2011 respectively (see \cite{Wang-Yu-Chen,2jubu4,2015}),
which have aroused the interest
of a great many authors in recent years, see \cite{11,tang,BenkovicZ,Du-Wang,Ghosseiri,Han-Wang-Xia,Wang-Yu,Xia-Wang-Han,Xu-Wang,Fan-Dai,Wang-Yu-Chen,2015,zhao,2jubu3,2jubu4,2jubu5}.
For any complex simple Lie algebra (on the adjoint module),
each 2-local derivation  must be a derivation (see \cite{2015});
and every (skewsymmetric) biderivation is an inner one (see \cite{Xia-Wang-Han,tang}).
In 2018, Bre$\check{s}$ar  and Zhao generalized the concept of skewsymmetric biderivations
of Lie algebras from the adjoint module to any module in the paper \cite{zhao}.
Motivated by Bre$\check{s}$ar  and Zhao's work on skewsymmetric  biderivations,
we aim to generalize
the notions of 2-local derivations and  biderivations  (without the skewsymmetric condition)
of a finite-dimensional Lie algebra from the adjoint module to any finite-dimensional module,
and determine all 2-local derivations and biderivations
of $\frak{sl}(2)$ on  any  simple module $V(n)$.
It is well-known that 3-dimensional complex simple Lie algebra $\frak{sl}(2)$
is so important that its simple modules determine the structure of semi-simple Lie algebras.
In this paper, we determine
all 2-local derivations and biderivations of $\frak{sl}(2)$ on $V(2)$,
which covers the result about $\frak{sl}(2)$ in the paper \cite{2015} and \cite{tang}.
Our main results are listed in the following,
in which $\mathrm{Bder}(\mathfrak{sl}(2),V(n))$ and $\mathrm{Ibder}(\mathfrak{sl}(2))$
are defined in Subsections \ref{1949} and \ref{1950}, respectively.
\begin{theorem}\label{1939}
Every 2-local derivation of $\mathfrak{sl}(2)$ on $V(n)$ is a derivation.
\end{theorem}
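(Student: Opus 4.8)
The plan is to reduce everything to the classification of ordinary derivations and then exploit the weight structure of $V(n)$. First I would recall that, since $\mathfrak{sl}(2)$ is semisimple and $V(n)$ is finite dimensional, Whitehead's first lemma gives $H^1(\mathfrak{sl}(2),V(n))=0$; hence every derivation $D\colon\mathfrak{sl}(2)\to V(n)$ is inner, i.e.\ of the form $D_w(x)=x\cdot w$ for a unique $w\in V(n)$ (uniqueness because $V(n)$ has no nonzero trivial submodule when $n\ge 1$; the case $n=0$ is trivial, since then every derivation vanishes). Fix the standard basis $e,f,h$ and a weight basis $v_0,\dots,v_n$ of $V(n)$ with $h\cdot v_i=(n-2i)v_i$, $f\cdot v_i=v_{i+1}$ and $e\cdot v_i=i(n-i+1)v_{i-1}$. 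Taking $y=x$ in the $2$-local condition shows that for every $x$ there is some $w_x$ with $\Delta(x)=x\cdot w_x$, and for every pair $x,y$ there is a common $w_{x,y}$ with $\Delta(x)=x\cdot w_{x,y}$ and $\Delta(y)=y\cdot w_{x,y}$.

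The engine of the proof is that $\ker(h\cdot)$ is exactly the zero weight space of $V(n)$. I would fix $w:=w_h$, so $\Delta(h)=h\cdot w$, and for arbitrary $y$ apply the $2$-local condition to the pair $(h,y)$: the witness $w_{h,y}$ satisfies $h\cdot w_{h,y}=\Delta(h)=h\cdot w$, whence $w_{h,y}-w\in\ker(h\cdot)$. When $n$ is odd the zero weight space is trivial, so $w_{h,y}=w$ and $\Delta(y)=y\cdot w_{h,y}=D_w(y)$ for every $y$; thus $\Delta=D_w$ is a derivation and we are done.

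The remaining, and genuinely harder, case is $n$ even, say $n=2m$, where $\ker(h\cdot)=\mathbb{C}v_m$ and, crucially, \emph{no} element of $\mathfrak{sl}(2)$ acts injectively on $V(n)$; this is the main obstacle, since the clean uniqueness argument above fails. Here the previous step only yields $\Delta(y)=D_w(y)+t_y\,(y\cdot v_m)$ for a scalar $t_y$ depending on $y$, so it remains to show that $t_y$ is constant modulo the kernel. I would pass to $\widetilde\Delta:=\Delta-D_w$, which is again a $2$-local derivation and satisfies $\widetilde\Delta(x)=t_x\,(x\cdot v_m)$. Applying the $2$-local condition to the pair $(e,f)$ and using $\ker(e\cdot)=\mathbb{C}v_0$ and $\ker(f\cdot)=\mathbb{C}v_n$, together with the fact that $v_0,v_m,v_n$ are distinct (as $0<m<n$), a short comparison of coefficients forces $t_e=t_f=:s$.

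Finally, for an arbitrary $y=\alpha e+\beta h+\gamma f$ I would play $y$ against $e$ and against $f$. Writing $c=t_y-s$ and using $y\cdot v_m=\alpha m(m+1)v_{m-1}+\gamma v_{m+1}$, the pair $(f,y)$ produces an identity whose right-hand side is supported on $\{v_{n-1},v_n\}$, and reading off the coefficient of $v_{m-1}$ (an index that can never equal $n-1$ or $n$) yields $c\alpha=0$; symmetrically the pair $(e,y)$, with right-hand side supported on $\{v_0,v_1\}$, forces $c\gamma=0$ by reading off the coefficient of $v_{m+1}$. Hence $c\,(y\cdot v_m)=0$ for every $y$, so $\Delta(y)=D_w(y)+t_y\,(y\cdot v_m)=D_w(y)+s\,(y\cdot v_m)=D_{w+s v_m}(y)$, i.e.\ $\Delta=D_{w+sv_m}$ is a derivation. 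I expect the only delicate point to be the coefficient bookkeeping of this last step, namely choosing the right coordinates so that the three relevant supports do not interfere; beyond that the argument is purely a matter of organizing the even case.
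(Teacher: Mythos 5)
Your proof is correct; I checked the kernel computations and the coefficient-of-support arguments in the even case, and they all go through (in particular $m-1\notin\{n-1,n\}$ and $m+1\notin\{0,1\}$ hold precisely because $0<m<n$, which is exactly where you need $n\neq 0$). It rests on the same key input as the paper's proof, namely the vanishing of $H^{1}(\mathfrak{sl}(2),V(n))$, so that every derivation is inner and every 2-local witness is of the form $D^{u}$; both arguments then compute in the weight basis $v_{0},\dots,v_{n}$. But the organization is genuinely different. The paper subtracts the witness of the \emph{pair} $(h,e)$, so the residual 2-local derivation $g_{1}$ vanishes at $h$ and $e$, and then kills $g_{1}(f)$ and $g_{1}(x)$ for general $x=x_{e}e+x_{h}h+x_{f}f$ by direct coefficient comparison, with a case split into $n$ odd, $n=0$, $n=1$, $n=2$ and $n>2$, the case $n=2$ requiring further subcases on $x_{f}$. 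You instead normalize only at $h$ (subtracting $D_{w}$ with $w=w_{h}$) and run everything through the kernels of the three basis elements acting on $V(n)$: $\ker(h\cdot)$ is the zero weight space, $\ker(e\cdot)=\mathbb{C}v_{0}$, $\ker(f\cdot)=\mathbb{C}v_{n}$. This makes the odd case one line, and in the even case your residual 2-local derivation is shown to equal the \emph{inner} derivation $D^{sv_{m}}$ rather than zero, which is the structural reason your argument needs no special treatment of $n=2$: all even $n\geq 2$ are handled uniformly by the two support arguments against the pairs $(e,y)$ and $(f,y)$. What the paper's route buys is completely elementary bookkeeping with no need to track kernels; what yours buys is uniformity and a cleaner explanation of where the residual freedom (the multiple of $v_{m}$ in the zero weight space) lives and why it is harmless.
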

\begin{theorem}\label{202202211356}
$ \mathrm{Bder}(\mathfrak{sl}(2),V(n))= \begin{cases}
			\mathrm{Ibder}(\mathfrak{sl}(2))&n=2\\
			0& n\neq 2.
		\end{cases}$
\end{theorem}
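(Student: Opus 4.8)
The plan is to exploit the two defining cocycle conditions of a biderivation (Subsection \ref{1949}) one argument at a time. First I would fix the first argument and use the second‑slice condition: for each fixed $x$ the map $y\mapsto\phi(x,y)$ is a derivation from $\mathfrak{sl}(2)$ into $V(n)$. Since $\mathfrak{sl}(2)$ is semisimple, Whitehead's first lemma (equivalently, the description of $\mathrm{Der}(\mathfrak{sl}(2),V(n))$ available from the preliminaries) shows every such derivation is inner; and as $V(n)^{\mathfrak{sl}(2)}=0$ for $n\ge 1$ there is a \emph{unique} $w(x)\in V(n)$ with $\phi(x,y)=y\cdot w(x)$, giving a linear map $w\colon\mathfrak{sl}(2)\to V(n)$. (For $n=0$ the module is trivial, $\mathrm{Der}=0$, and there is nothing to prove.) This reduces the whole problem to the single linear datum $w\in\mathrm{Hom}(\mathfrak{sl}(2),V(n))$.

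Next I would substitute this form back into the first‑slice condition $\phi([x_1,x_2],y)=x_1\cdot\phi(x_2,y)-x_2\cdot\phi(x_1,y)$. Writing $x_1\cdot(y\cdot w(x_2))=[x_1,y]\cdot w(x_2)+y\cdot(x_1\cdot w(x_2))$ and collecting the terms that act through $y$, the condition collapses to the master equation
\[
 y\cdot\delta(x_1,x_2)=[x_1,y]\cdot w(x_2)-[x_2,y]\cdot w(x_1),
\]
required for all $x_1,x_2,y$, where $\delta(x_1,x_2):=w([x_1,x_2])-x_1\cdot w(x_2)+x_2\cdot w(x_1)$. Thus, via the assignment $w\mapsto\phi$ (injective for $n\ge1$), the space $\mathrm{Bder}(\mathfrak{sl}(2),V(n))$ is identified with the kernel of an $\mathfrak{sl}(2)$‑equivariant linear operator on $\mathrm{Hom}(\mathfrak{sl}(2),V(n))\cong V(2)\otimes V(n)$, and is therefore an $\mathfrak{sl}(2)$‑submodule.

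The solution of the master equation is where I would organize the work representation‑theoretically. By Clebsch--Gordan, $V(2)\otimes V(n)=V(n+2)\oplus V(n)\oplus V(n-2)$ (with the evident truncation for $n\le1$), and since the kernel is a submodule it is a sum of some of these irreducible pieces; a given piece lies in $\mathrm{Bder}$ if and only if its highest weight vector satisfies the master equation. I would compute the three highest weight vectors explicitly in the standard basis $v_0,\dots,v_n$ of $V(n)$ (equivalently in terms of $p=w(e)$, $q=w(f)$, $r=w(h)$) and substitute into the equation evaluated on the basis pairs of $\mathfrak{sl}(2)$. This reduces to a handful of scalar identities; the decisive one forces $n^2=4$, i.e.\ $n=2$, and it is met only by the $V(n-2)$ summand --- which for $n=2$ is the trivial summand $V(0)$ spanned by $\mathrm{id}_{V(2)}$, corresponding under $V(2)\cong\mathfrak{sl}(2)$ exactly to the bracket map $(x,y)\mapsto[x,y]$ that generates $\mathrm{Ibder}(\mathfrak{sl}(2))$ (Subsection \ref{1950}). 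The summands $V(n+2)$ and $V(n)$ are eliminated for every $n\ge1$.

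Finally I would assemble the cases: for $n\ne2$ all summands fail, giving $\mathrm{Bder}(\mathfrak{sl}(2),V(n))=0$, while for $n=2$ only the one‑dimensional inner piece survives, giving $\mathrm{Bder}(\mathfrak{sl}(2),V(2))=\mathrm{Ibder}(\mathfrak{sl}(2))$. I expect the main obstacle to be the middle step: deriving the master equation correctly and then carrying out the weight‑vector substitutions without sign or normalization slips, since it is precisely the arithmetic of these substitutions (the numerical coincidence at $n=2$) that separates the adjoint module from all other simple modules. Checking only the three highest weight vectors, rather than the full nine‑equation system on $p,q,r$, is what keeps this computation honest and short.
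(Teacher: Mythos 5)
Your proposal is correct, but it takes a genuinely different route from the paper. The paper first splits $g$ into its symmetric and skewsymmetric parts: the skewsymmetric part is dispatched by citing Bre\v{s}ar--Zhao's theorem on skewsymmetric biderivations together with Schur's Lemma (this is where $\mathrm{Ibder}(\mathfrak{sl}(2))$ appears for $n=2$), while the symmetric part is killed by hand, using the weight-space decomposition of $\mathrm{Bder}$ (Lemmas \ref{1311} and \ref{1028}), the representation $g_{1i}(x,y)=x\varphi_i(y)=y\varphi_i(x)$ from Lemma \ref{937}, and a coefficient computation ending in the three relations (\ref{5-2})--(\ref{5-4}). You instead treat all biderivations at once: writing $\phi(x,y)=y\cdot w(x)$ (the same use of $\mathrm{Der}=\mathrm{Ider}$, i.e.\ Whitehead/\cite{1318}, that underlies Lemma \ref{937}), you encode $\mathrm{Bder}$ as the kernel of an $\mathfrak{sl}(2)$-equivariant operator on $\mathrm{Hom}(\mathfrak{sl}(2),V(n))\cong V(2)\otimes V(n)$ and then test only the highest weight vectors of the (multiplicity-free) Clebsch--Gordan summands $V(n+2)\oplus V(n)\oplus V(n-2)$. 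I checked the decisive substitutions: the top and middle summands fail for all $n\ge 1$ (e.g.\ $x_1=e$, $x_2=f$, $y=f$ gives $hv_0=nv_0\neq 0$, resp.\ $[e,h]\cdot v_1\neq0$), and the bottom summand yields $(n-1)(n-2)(n+2)=0$, surviving exactly at $n=2$ where its highest weight vector is a multiple of $\mathrm{id}_{V(2)}$, i.e.\ the bracket map --- so your plan does close. What your route buys: it is self-contained (no appeal to the external skewsymmetric-biderivation theorem), it explains conceptually why $n=2$ is special (the trivial summand of $V(2)\otimes V(2)$), and it compresses the case analysis to three highest-weight checks; the small debts you incur are verifying that the master-equation operator is indeed a module map (true, since bracket and module action are equivariant, so the kernel is a submodule) and noting that the decomposition is multiplicity-free (otherwise a submodule need not be a sum of the displayed pieces). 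What the paper's route buys: the skew part comes for free from the literature, and its graded lemmas are general-purpose machinery developed anyway for the rest of the paper.
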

\section{Preliminaries}
In this paper, let $L$ be a finite-dimensional Lie algebra over the complex number field $\mathbb{C}$
and  $V$ a finite-dimensional $L$-module.
Denote by $\mathrm {Hom}(L,V)$ the set consisting of linear maps from $L$ to $V$,
and  $V^L$  the maximal trivial submodule of $V$,
  that is,
  $$V^{L}=\{v\in V\mid xv=0, \forall x\in L\}.$$

\subsection{Definitions}\label{1949}
For $ D\in \mathrm {Hom}(L,V) $,
if
\begin{align*}
	D\left ( \left [x,y  \right ] \right )=xD(y)-yD(x), \quad \forall x,y\in L,
\end{align*}
then $ D $ is said to be a derivation of $L$ on $V$.
Denote by $\mathrm {Der}(L,V)$ the set consisting of all derivations of $L$ on $V$.
It is clear that $\mathrm {Der} (L,V)$ is a finite-dimensional vector space.
Let $D\in\mathrm{Der}(L,V)$.
If there exists $v\in V$, such that $D(x)=xv$ for any $x\in L$,
 then $D$ is said to be an inner determined by $v$, denoted by $D^{v}$.
Write
$$\mathrm{Ider}(L,V)=\left\{D^{v}\mid v\in V \right\}.$$
It is clear that $\mathrm{Ider}(L,V)$ is a subspace of $\mathrm {Der}(L,V)$.

In the following, we introduce the definitions
of 2-local derivations and biderivations of $L$ on $V$,
which generalizes the usual ones from the adjoint module to any finite-dimensional module.

\begin{definition}
Let $g: L\longrightarrow V$ be a map.
If for any $x, y\in L$, there exists $D_{x,y}\in \mathrm {Der} (L,V)$ (depending on $x, y$),
such that
$$g (x)=D_{x,y}(x),\quad g (y)=D_{x,y}(y),$$
then $g$ is said to be a 2-local derivation of $L$ on $V$.
\end{definition}
\begin{definition}\label{1617}
	Let $g: L \times  L\rightarrow V$ be a bilinear map.
If for any $x,~y,~z\in L$,
   $$g([x,y],z)=xg(y,z)-yg(x,z),$$
   $$g(x,[y,z])=yg(x,z)-zg(x,y),$$
then $g$ is said to be a biderivation of $L$ on $V$.
Denote by $\mathrm{Bder}(L,V)$ the set consisting of biderivations of $L$ on $V$.
\end{definition}

\subsection{Lemmas}
In this subsection, we give some technical lemmas which will be used in the future.
\begin{lemma}\label{base}
Let $V$ be a nontrivial and simple $L$-module. Then
$$
a_{1}D^{u_{1}}+a_{2}D^{u_{2}}+\cdots +a_{k}D^{u_{k}}=0
\Longleftrightarrow a_{1}u_{1}+a_{2}u_{2}+\cdots +a_{k}u_{k}=0,
 $$
where $u_{1},u_{2},\ldots,u_{k} \in V, a_{1},a_{2},\ldots,a_{k} \in  \mathbb{C}$.
In particular, 	
    \begin{center}
	$D^{u_{1}},\ldots ,D^{u_{k}}$ is a basis  of $\mathrm{Ider}(L,V)\Longleftrightarrow u_{1},\ldots,u_{k}$ is a basis  of $V$.
	\end{center}
\end{lemma}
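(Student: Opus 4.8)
The plan is to reduce the asserted biconditional to the single statement $D^{w}=0 \Longleftrightarrow w=0$ and then exploit the simplicity of $V$. The starting point is the observation that inner derivations combine linearly in their defining vector: for any $x\in L$ one has
$$\left(a_{1}D^{u_{1}}+\cdots+a_{k}D^{u_{k}}\right)(x)=a_{1}(xu_{1})+\cdots+a_{k}(xu_{k})=x\left(a_{1}u_{1}+\cdots+a_{k}u_{k}\right),$$
so that $a_{1}D^{u_{1}}+\cdots+a_{k}D^{u_{k}}=D^{w}$, where $w=a_{1}u_{1}+\cdots+a_{k}u_{k}$. Hence the stated equivalence is exactly the claim that $D^{w}=0$ if and only if $w=0$.

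The reverse implication is immediate, since $w=0$ gives $D^{w}(x)=x\cdot 0=0$ for every $x$. For the forward implication I would suppose $D^{w}=0$, i.e.\ $xw=0$ for all $x\in L$; by definition this says precisely that $w\in V^{L}$. The key point is that $V^{L}=0$ for a nontrivial simple module: $V^{L}$ is an $L$-submodule of $V$ (it is a subspace, and $y\cdot v=0\in V^{L}$ for $v\in V^{L}$), so simplicity forces $V^{L}\in\{0,V\}$, while $V^{L}=V$ would make the action trivial, contradicting the hypothesis that $V$ is nontrivial. Therefore $w\in V^{L}=0$, completing the equivalence. This single use of simplicity to annihilate $V^{L}$ is the only substantive ingredient; everything else is linear bookkeeping, and I do not expect any genuine obstacle.

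For the ``in particular'' statement, I would consider the map $\varphi\colon V\to\mathrm{Ider}(L,V)$ given by $\varphi(v)=D^{v}$. It is linear by the computation above and surjective by the very definition of $\mathrm{Ider}(L,V)$, while the equivalence just proved shows $\ker\varphi=V^{L}=0$; hence $\varphi$ is a linear isomorphism. An isomorphism carries bases to bases, so $u_{1},\ldots,u_{k}$ is a basis of $V$ exactly when the images $D^{u_{1}}=\varphi(u_{1}),\ldots,D^{u_{k}}=\varphi(u_{k})$ form a basis of $\mathrm{Ider}(L,V)$, which is the second assertion.
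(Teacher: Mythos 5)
Your proof is correct and follows essentially the same route as the paper: both reduce the linear combination to a single inner derivation $D^{w}$ and then use simplicity plus nontriviality to conclude $V^{L}=0$, forcing $w=0$. You are somewhat more explicit than the paper in justifying $V^{L}=0$ and in deducing the basis statement via the isomorphism $v\mapsto D^{v}$, but these are details the paper leaves implicit rather than a different argument.
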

\begin{proof}
	Let $ x\in L$. Then
	\begin{align*}
	&(a_{1}D^{u_{1}}+a_{2}D^{u_{2}}+\cdots +a_{k}D^{u_{k}})(x)\\
    &=a_{1}D^{u_{1}}(x)+a_{2}D^{u_{2}}(x)+\cdots +a_{k}D^{u_{k}}(x)\\
	&=a_{1}xu_{1}+a_{2}xu_{2}+\cdots +a_{k}xu_{k}\\
	&=x(a_{1}u_{1}+a_{2}u_{2}+\cdots +a_{k}u_{k})\\
	&=D^{a_{1}u_{1}+a_{2}u_{2}+\cdots +a_{k}u_{k}}(x).
    \end{align*}
That is,
$$
	a_{1}D^{u_{1}}+a_{2}D^{u_{2}}+\cdots +a_{k}D^{u_{k}}=D^{a_{1}u_{1}+a_{2}u_{2}+\cdots +a_{k}u_{k}},
 $$
which implies the ``if" direction.

Let $a_{1}D^{u_{1}}+a_{2}D^{u_{2}}+\cdots +a_{k}D^{u_{k}}=0$.
For any $x\in L$, we have
  $$
  (a_{1}D^{u_{1}}+a_{2}D^{u_{2}}+\cdots +a_{k}D^{u_{k}})(x)=0,
  $$
then
  $$
 D^{a_{1}u_{1}+a_{2}u_{2}+\cdots +a_{k}u_{k}}(x) =x(a_{1}u_{1}+a_{2}u_{2}+\cdots +a_{k}u_{k}).
  $$
It follows that
  $$
 a_{1}u_{1}+a_{2}u_{2}+\cdots +a_{k}u_{k} \in V^{L}.
  $$
Since $V$ is nontrivial and simple,  we get $V^{L}=0$.
 Then
  $$a_{1}u_{1}+a_{2}u_{2}+\cdots +a_{k}u_{k}=0.$$
\end{proof}

Denote by $\mathrm{Blin}(L, V)$ the set consisting of bilinear maps from $L\times L$ to $V$.
Let $\frak{h}$ be a Cartan subalgebra of $L$.
Suppose
$L$ and  $V$ possess  weight space decompositions with respect to $\frak{h}$:
$$L=\frak{h} \bigoplus \sum_{\alpha \in \frak{h}^* }L_\alpha , \qquad V=\bigoplus_{\alpha \in\frak{h}^*}V_{\alpha}.$$
In this manner, $L$ is also an $\frak{h}^*$-graded Lie algebra and $V$ an $\frak{h}^*$-graded module.
Then the assumption on the dimensions implies that the $L$-modules $\mathrm{Hom}(L,V)$ and $\mathrm{Blin}(L,V)$
have $\frak{h}^*$-grading structure induced by the ones of $L$ and $V$:
\begin{align*}
\mathrm{Hom}(L,V)&=\bigoplus _{\gamma \in \frak{h}^*}\mathrm{Hom}(L,V)_{(\gamma)},\\
\mathrm{Blin}(L,V)&=\bigoplus _{\gamma \in \frak{h}^*}\mathrm{Blin}(L,V)_{(\gamma)},
\end{align*}
where
\begin{align*}
\mathrm{Hom}(L,V)_{(\gamma)}&=\{\phi\in \mathrm{Hom}(L,V)\mid \phi(L_{\alpha})\subset V_{\alpha+\gamma}, \forall \alpha \in  \frak{h}^*\},\\
\mathrm{Blin}(L,V)_{(\gamma)}&=\{\phi\in \mathrm{Blin}(L,V)\mid \phi(L_{\alpha},L_{\beta})\subset V_{\alpha+\beta+\gamma}, \forall \alpha,\beta \in  \frak{h}^*\}.
\end{align*}
Moreover, $\mathrm{Hom}(L,V)$  and $\mathrm{Blin}(L,V)$ are $L$-modules with the following action:
$$(xf)(y)=xf(y)-f([x,y]),$$
$$(xg)(y,z)=xg(y,z)-g([x,y],z)-g(y,[x,z]),$$
where $x,y,z\in L, f\in \mathrm{Hom}(L,V), g\in \mathrm{Blin}(L,V).$
Clearly,
$\mathrm{Hom}(L,V)_{(\gamma)}$ and $\mathrm{Blin}(L,V)_{(\gamma)}$ are precisely
the weight spaces of weight $\gamma$ with respect to $\frak{h}$:
\begin{align*}
\mathrm{Hom}(L,V)_{(\gamma)}&=\mathrm{Hom}(L,V)_{\gamma}\\
&=\{\varphi\in \mathrm{Hom}(L,V)\mid h \varphi=\gamma(h)\varphi, \forall h\in \frak{h}\},
\end{align*}
\begin{align*}
\mathrm{Blin}(L,V)_{(\gamma)}&=\mathrm{Blin}(L,V)_{\gamma}\\
&=\{\varphi\in \mathrm{Blin}(L,V)\mid h \varphi=\gamma(h)\varphi, \forall h\in \frak{h}\}.
\end{align*}
It is a standard fact that $\mathrm{Der}(L,V)$ is an $\frak{h}^*$-graded submodule
(hence a weight-submodule with respect to $\frak{h}$) of $\mathrm{Hom}(L,V)$:
\begin{align}\label{1543}
\mathrm{Der}(L,V)&=\bigoplus _{\gamma \in \frak{h}^*}\mathrm{Der}(L,V)\cap \mathrm{Hom}(L,V)_{\gamma}.
\end{align}
\begin{lemma}\label{1311}
Keep the notations as above. Then
\begin{align*}
\mathrm{Bder}(L,V)&=\bigoplus_{\gamma \in\frak{h}^*}\mathrm{Bder}(L,V)\cap \mathrm{Blin}(L,V)_{\gamma}.
\end{align*}
\end{lemma}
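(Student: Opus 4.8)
The plan is to imitate the standard fact recorded in (\ref{1543}) for $\mathrm{Der}(L,V)$. The inclusion ``$\supseteq$'' is immediate, since every summand $\mathrm{Bder}(L,V)\cap \mathrm{Blin}(L,V)_{\gamma}$ lies in $\mathrm{Bder}(L,V)$, and the sum on the right is automatically direct because the ambient sum $\mathrm{Blin}(L,V)=\bigoplus_{\gamma}\mathrm{Blin}(L,V)_{\gamma}$ is. The whole content is therefore the inclusion ``$\subseteq$'', i.e.\ that every biderivation decomposes into biderivations of pure weight. The decisive point toward this is that $\mathrm{Bder}(L,V)$ is an $\frak{h}$-submodule --- in fact an $L$-submodule --- of the weight-decomposed $L$-module $\mathrm{Blin}(L,V)$; once this is known, the weight-space decomposition is a purely linear-algebraic consequence.

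First I would verify that $\mathrm{Bder}(L,V)$ is stable under the $L$-action on $\mathrm{Blin}(L,V)$. Fix $g\in\mathrm{Bder}(L,V)$ and $w\in L$, and check that $wg$ again satisfies both defining identities of Definition \ref{1617}. Expanding $(wg)([x,y],z)$ by the module action, and then substituting the biderivation identities for $g$, the essential inputs are the Jacobi identity, used to rewrite $[w,[x,y]]=[[w,x],y]+[x,[w,y]]$, and the module relation $[w,x]=wx-xw$ on $V$, which converts the leading terms $wx\,g(y,z)$ and $-wy\,g(x,z)$ into $xw\,g(y,z)$ and $-yw\,g(x,z)$. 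After collecting the remaining terms one finds exactly $x(wg)(y,z)-y(wg)(x,z)$, so the first identity holds; the second is established by the symmetric computation. Hence $wg\in\mathrm{Bder}(L,V)$, and $\mathrm{Bder}(L,V)$ is an $L$-submodule of $\mathrm{Blin}(L,V)$. This computation is the only real obstacle: the bookkeeping of the nine resulting terms must be arranged so that the Jacobi-expansion and the operator identity $[w,x]=wx-xw$ cancel against each other in the right way.

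Finally I would pass from invariance to the grading. Since $\frak{h}\subset L$, the subspace $\mathrm{Bder}(L,V)$ is in particular invariant under $\frak{h}$, and $\frak{h}$ acts on $\mathrm{Blin}(L,V)$ as a commuting family of diagonalizable operators whose simultaneous eigenspaces are exactly the (finitely many nonzero) weight spaces $\mathrm{Blin}(L,V)_{\gamma}$. For $g\in\mathrm{Bder}(L,V)$ write $g=\sum_{\gamma}g_{\gamma}$ with $g_{\gamma}\in\mathrm{Blin}(L,V)_{\gamma}$; each projection $g\mapsto g_{\gamma}$ is given by a polynomial in a single suitably chosen $h$-action (a Lagrange/Vandermonde interpolation over the finitely many distinct weights), so each $g_{\gamma}$ lies in the invariant subspace $\mathrm{Bder}(L,V)$ as well. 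Thus $g_{\gamma}\in\mathrm{Bder}(L,V)\cap\mathrm{Blin}(L,V)_{\gamma}$, which gives the inclusion ``$\subseteq$'' and completes the proof.
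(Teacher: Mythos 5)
Your proof is correct, but it takes a genuinely different route from the paper's. The paper never proves (or uses) that $\mathrm{Bder}(L,V)$ is an $L$-submodule of $\mathrm{Blin}(L,V)$; instead it reduces the bilinear problem to the linear one. A bilinear map $g$ is a biderivation exactly when all partial maps $g(x,\cdot)$ and $g(\cdot,x)$ are derivations, so after writing $g=\sum_{\gamma}g_{\gamma}$ with $g_{\gamma}\in\mathrm{Blin}(L,V)_{\gamma}$ and fixing a homogeneous $x\in L_{\beta}$, the maps $g_{\gamma}(x,\cdot)\in\mathrm{Hom}(L,V)_{\gamma+\beta}$ are precisely the homogeneous components of the derivation $g(x,\cdot)$; the recorded fact (\ref{1543}) that $\mathrm{Der}(L,V)$ is a graded subspace of $\mathrm{Hom}(L,V)$ then forces each $g_{\gamma}(x,\cdot)$ to be a derivation, linearity extends this to all $x\in L$, and the symmetric argument in the other slot gives $g_{\gamma}\in\mathrm{Bder}(L,V)$. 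Your argument instead redoes, at the level of $\mathrm{Blin}(L,V)$, exactly what the standard proof of (\ref{1543}) does for $\mathrm{Hom}(L,V)$: the Jacobi-plus-module-axiom computation showing $wg\in\mathrm{Bder}(L,V)$ is correct (the key cancellations of the form $wx\,g(y,z)-[w,x]g(y,z)=xw\,g(y,z)$ do go through), and the interpolation-polynomial projection argument is the standard way to see that an $\frak{h}$-invariant subspace of a finite-dimensional diagonalizable module inherits the weight decomposition (finite-dimensionality of $\mathrm{Blin}(L,V)$, which holds since $L$ and $V$ are finite-dimensional, guarantees a separating element of $\frak{h}$ exists). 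What your route buys is self-containedness and the stronger conclusion that $\mathrm{Bder}(L,V)$ is an $L$-submodule of $\mathrm{Blin}(L,V)$, not merely a graded subspace; what the paper's route buys is brevity, since all the invariance-and-projection work is absorbed into the already-quoted standard fact (\ref{1543}) and no new multi-term computation is needed.
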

\begin{proof}
For any $g\in \mathrm{Bder}(L,V)$, there exists $g_{\gamma}\in  \mathrm{Blin}(L,V)_{\gamma}$
such that $g=\sum_{\gamma \in\frak{h}^*}g_{\gamma}$.
If $x\in L_{\beta}$ with $\beta\in \frak{h}^*$,
then $g(x,\cdot)=\sum_{\gamma \in\frak{h}^*}g_{\gamma}(x,\cdot)$.
Note that $g(x,\cdot)\in \mathrm{Der}(L,V)$ and $g_{\gamma}(x,\cdot)\in \mathrm{Hom}(L,V)_{\gamma+\beta}$.
Then by Equation (\ref{1543}), we have
$g_{\gamma}(x,\cdot)\in \mathrm{Der}(L,V)_{\gamma+\beta}$.
It follows that
$g_{\gamma}(y,\cdot)\in \mathrm{Der}(L,V)$ for any $y\in L$.
Similarly, $g_{\gamma}(\cdot,y)\in \mathrm{Der}(L,V)$ for any $y\in L$.
Then $g_{\gamma}\in \mathrm{Bder}(L,V)$.
\end{proof}
\begin{lemma}\label{937}
	Let $V$ be a nontrivial and simple $L$-module.
If $\mathrm{Der}(L,V)=\mathrm{Ider}(L,V)$,
then for any $g \in \mathrm{Bder} (L,V)_\alpha$ with $\alpha \in\frak{h}^*$,
 there exist two maps $\varphi ,\psi \in \mathrm{Hom}(L,V)_\alpha $ such that
	$$g (x,y)=x\varphi(y)=y\psi (x),\qquad  \forall x,y\in L .$$
\end{lemma}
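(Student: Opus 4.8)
The plan is to build $\varphi$ and $\psi$ directly from the two defining identities of a biderivation, reading off the inner element attached to each partial map. First I note that the two equations in Definition~\ref{1617} say precisely that the partial maps are derivations: fixing $x$, the second identity $g(x,[y,z])=yg(x,z)-zg(x,y)$ states that $y\mapsto g(x,y)$ lies in $\mathrm{Der}(L,V)$, and fixing $y$, the first identity $g([x,y],z)=xg(y,z)-yg(x,z)$ read in the first slot states that $x\mapsto g(x,y)$ lies in $\mathrm{Der}(L,V)$ as well. Invoking the hypothesis $\mathrm{Der}(L,V)=\mathrm{Ider}(L,V)$, each of these derivations is inner, so there are elements of $V$ (call them $\psi(x)$ and $\varphi(y)$) with $g(x,\cdot)=D^{\psi(x)}$ and $g(\cdot,y)=D^{\varphi(y)}$; evaluating gives $g(x,y)=y\psi(x)$ and $g(x,y)=x\varphi(y)$, which is the desired identity once I check that $\varphi,\psi$ are well-defined linear maps of weight $\alpha$.

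Uniqueness and linearity both rest on Lemma~\ref{base}. Since $V$ is nontrivial and simple, $V^{L}=0$, so $v\mapsto D^{v}$ is injective; hence $\psi(x)$ and $\varphi(y)$ are uniquely determined. Bilinearity of $g$ gives $g(x_{1}+cx_{2},\cdot)=g(x_{1},\cdot)+cg(x_{2},\cdot)$, i.e. $D^{\psi(x_{1}+cx_{2})}=D^{\psi(x_{1})+c\psi(x_{2})}$ by the ``if'' direction of Lemma~\ref{base}, and injectivity forces $\psi(x_{1}+cx_{2})=\psi(x_{1})+c\psi(x_{2})$; thus $\psi\in\mathrm{Hom}(L,V)$, and symmetrically $\varphi\in\mathrm{Hom}(L,V)$.

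The main point, and the step I expect to require the most care, is the homogeneity $\varphi,\psi\in\mathrm{Hom}(L,V)_{\alpha}$, which is where the grading hypothesis $g\in\mathrm{Bder}(L,V)_{\alpha}$ enters. I would argue on weight vectors: for $x\in L_{\beta}$ the map $g(x,\cdot)$ sends $L_{\gamma}$ into $V_{\beta+\gamma+\alpha}$, so it is a homogeneous element of $\mathrm{Hom}(L,V)$ of weight $\beta+\alpha$. Decomposing $\psi(x)=\sum_{\mu}v_{\mu}$ into weight components, one has $D^{\psi(x)}=\sum_{\mu}D^{v_{\mu}}$ with $D^{v_{\mu}}\in\mathrm{Hom}(L,V)_{\mu}$ (because $D^{v_{\mu}}(x)=xv_{\mu}\in V_{\gamma+\mu}$ for $x\in L_{\gamma}$). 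Matching weights and using injectivity of $v\mapsto D^{v}$ from Lemma~\ref{base} kills every component except $\mu=\beta+\alpha$, so $\psi(x)\in V_{\beta+\alpha}$ and hence $\psi(L_{\beta})\subset V_{\beta+\alpha}$, i.e. $\psi\in\mathrm{Hom}(L,V)_{\alpha}$. The identical computation with the roles of the two slots exchanged gives $\varphi(L_{\gamma})\subset V_{\gamma+\alpha}$, completing the proof. The only genuine obstacle is bookkeeping the three weights $\beta$, $\gamma$, $\alpha$ correctly and remembering that $D^{\psi(x)}$ is seen to be homogeneous only after $\psi(x)$ has been shown to be a weight vector, which is exactly what the injectivity argument supplies.
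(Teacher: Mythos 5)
Your proposal is correct, and its first half coincides with the paper's argument: both read off from Definition~\ref{1617} that $g(x,\cdot)$ and $g(\cdot,y)$ are derivations, use $\mathrm{Der}(L,V)=\mathrm{Ider}(L,V)$ to write $g(x,\cdot)=D^{\psi(x)}$ and $g(\cdot,y)=D^{\varphi(y)}$, and obtain linearity of $\psi,\varphi$ from $V^L=0$ (your injectivity of $v\mapsto D^v$ via Lemma~\ref{base} is the same fact, just packaged differently). Where you genuinely diverge is the homogeneity step. The paper argues by an explicit Cartan computation: for $x\in L_\beta$, $y\in L_\gamma$, $h\in\frak{h}$ it evaluates $hg(x,y)$ in two ways, once as $(\alpha+\beta+\gamma)(h)\,y\psi(x)$ using the weight of $g$, and once as $[h,y]\psi(x)+yh\psi(x)=\gamma(h)y\psi(x)+yh\psi(x)$, then cancels and uses $V^L=0$ to get $h\psi(x)=(\alpha+\beta)(h)\psi(x)$. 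You instead work structurally inside the graded module $\mathrm{Hom}(L,V)$: since $g$ is homogeneous of weight $\alpha$, the map $g(x,\cdot)=D^{\psi(x)}$ lies in $\mathrm{Hom}(L,V)_{\beta+\alpha}$; decomposing $\psi(x)=\sum_\mu v_\mu$ gives $D^{\psi(x)}=\sum_\mu D^{v_\mu}$ with $D^{v_\mu}\in \mathrm{Hom}(L,V)_\mu$, and directness of the decomposition $\mathrm{Hom}(L,V)=\bigoplus_\gamma \mathrm{Hom}(L,V)_\gamma$ together with injectivity of $v\mapsto D^v$ kills every $v_\mu$ with $\mu\neq\beta+\alpha$. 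Both routes rest on the same two pillars ($V^L=0$ and the induced grading on $\mathrm{Hom}(L,V)$ set up before Lemma~\ref{1311}), but yours is more formal and avoids any element-wise computation with $h$, while the paper's is more elementary, needing only the weight spaces of $V$ and not the direct-sum decomposition of $\mathrm{Hom}(L,V)$ at this point. One small slip in your closing sentence: the logic runs the other way around --- $D^{\psi(x)}$ is homogeneous from the outset because it equals $g(x,\cdot)$, and it is $\psi(x)$ being a weight vector that is the conclusion; the body of your argument, however, has this in the correct order.
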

\begin{proof}
Let $x\in L$.
By definition we have $g(x,\cdot)\in \mathrm{Der}(L,V)$ and $g(\cdot,x)\in \mathrm{Der}(L,V)$.
Since $\mathrm{Der}(L,V)=\mathrm{Ider}(L,V)$ and $V$ is a nontrivial and simple $L$-module,
then there exists the unique $v\in V$ such that $g(x,\cdot )=D^v,$
and the unique $w\in V$ such that $g(\cdot, x)=D^w,$
that is,
	$$g(x,y)=D^v(y)=yv, \; g(y,x)=D^w(y)=yw, \;\;  \forall y\in L.$$
Define the maps $\psi: L\rightarrow V$ by $\psi(x)=v$
and $\varphi: L\rightarrow V$ by $\varphi(x)=w$,
which means
$$g(x,y)=y\psi(x), \; g(y,x)=y\varphi(x), \;\;  \forall y\in L.$$
Claim $\psi \in \mathrm{Hom}(L,V)$.
In fact, for any $a\in\mathbb{C}$ and $x_1,x_2,y\in L,$
	the equation $$g(ax_1+x_2,y)=ag(x_1,y)+g(x_2,y)$$
implies
	$$y\psi (ax_1+x_2)=ay\psi (x_1)+y\psi (x_2)=y(a\psi (x_1)+\psi (x_2)).$$
Since $y$ is arbitrary, we have
$$\psi (ax_1+x_2)-a\psi (x_1)-\psi (x_2)\in V^L=0,$$
where the last equality holds since $V$ is nontrivial and simple.
Hence
$$\psi (ax_1+x_2)=a\psi (x_1)+\psi (x_2),$$
that is, $\psi \in \mathrm{Hom}(L,V)$.
In the following, we shall prove $\psi \in \mathrm{Hom}(L,V)_{\alpha}$.
Let $\beta ,~\gamma \in \frak{h}^*,$ $ x\in L_{\beta}, y\in L_{\gamma}$ and $h\in \frak{h}$.
On one hand,
	$$hg(x,y)=(\alpha +\beta +\gamma )(h)g(x,y)=(\alpha +\beta +\gamma )(h)y\psi (x).$$
On the other hand,
	\begin{align*}
		hg(x,y)=hy\psi (x)&=[h,y]\psi (x)+yh\psi (x)\\
	                        	&=\gamma (h)y\psi (x)+yh\psi (x)\\
	                          	&=y(\gamma (h)\psi (x)+h\psi (x)).
	\end{align*}
Then
$$(\alpha +\beta )(h)y\psi (x)=yh\psi (x).$$
Since both $\gamma$ and $y$ are arbitrary, so
$$h\psi (x)-(\alpha +\beta )(h)\psi (x)\in V^L=0,$$
that is,
$$h\psi (x)=(\alpha +\beta )(h)\psi (x).$$
It follows that $\psi \in \mathrm{Hom}(L,V)_\alpha $.
Similarly, $\varphi \in \mathrm{Hom}(L,V)_\alpha $.
\end{proof}

Recall that 3-dimensional simple Lie algebra $\mathfrak{sl}(2)$
has a standard basis:
$$
	e:=\begin{pmatrix}
		0& 1\\
		0& 0
	\end{pmatrix},\quad
	h:=\begin{pmatrix}
		1& 0\\
		0& -1
	\end{pmatrix},\quad
	f:=\begin{pmatrix}
		0& 0\\
		1& 0
	\end{pmatrix}.
$$
As we are all known, $n+1$-dimensional simple $\mathfrak{sl}(2)$-module $V(n)$ has a standard basis:
$$v _{0},v _{1},\ldots,v _{n},$$
with the action as follows:
$$
ev _{i}=i\left ( n+1-i \right )v_{i-1},~ \quad hv _{i}=(n-2i)v_{i},~ \quad fv _{i}=v_{i+1},
$$
where $i=0,1,\ldots ,n,~ v_{-1}=v_{n+1} =0$.
Note that
$\mathbb{C}h$ is a Cartan subalgebra of $\mathfrak{sl}(2)$,
and
$$\mathfrak{sl}(2)=\mathbb{C}h \bigoplus \mathfrak{sl}(2)_2\bigoplus \mathfrak{sl}(2)_{-2},\quad V(n)=\bigoplus _{i=0}^{n}V(n)_{n-2i},$$
where
$$\mathfrak{sl}(2)_2=\mathbb{C}e,\quad \mathfrak{sl}(2)_{-2}=\mathbb{C}f,\quad V(n)_{n-2i}=\mathbb{C}v_i.$$
Write
$$\Delta _V=\left\{n-2i\mid i=0,1,\cdots ,n \right\}, \quad \Delta =\left\{0,2,-2\right\}.$$

The following lemma characterizes the weight spaces
$\mathrm{Bder} (\mathfrak{sl}(2),V(n))_{\gamma}$ for
$\gamma \notin \Delta _V.$
\begin{lemma}\label{1028}
	Let $n\geq 1$. If $\gamma \notin \Delta _V,$
then $\mathrm{Bder} (\mathfrak{sl}(2),V(n))_{\gamma}=0$.
\end{lemma}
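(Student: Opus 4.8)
The plan is to pin down a weight-$\gamma$ biderivation through Lemma~\ref{937} and then annihilate it, first by a weight count and finally, in two boundary cases, by the defining identity itself. Since $V(n)$ with $n\ge 1$ is a nontrivial finite-dimensional module over the semisimple Lie algebra $\mathfrak{sl}(2)$, the first Whitehead lemma gives $\mathrm{Der}(\mathfrak{sl}(2),V(n))=\mathrm{Ider}(\mathfrak{sl}(2),V(n))$, so the hypothesis of Lemma~\ref{937} holds. For $g\in\mathrm{Bder}(\mathfrak{sl}(2),V(n))_{\gamma}$ we thus obtain $\varphi\in\mathrm{Hom}(\mathfrak{sl}(2),V(n))_{\gamma}$ with $g(x,y)=x\varphi(y)$ for all $x,y$. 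As $V(n)^{\mathfrak{sl}(2)}=0$, the identity $x\varphi(y)=0$ for all $x$ forces $\varphi(y)=0$; hence $g=0$ if and only if $\varphi=0$, and the whole statement reduces to proving $\varphi=0$.

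Next I would read off the weights $\varphi(h)\in V(n)_{\gamma}$, $\varphi(e)\in V(n)_{2+\gamma}$, $\varphi(f)\in V(n)_{-2+\gamma}$. Since $\gamma\notin\Delta_V$ we get $\varphi(h)=0$ at once. A parity-and-range check on $\Delta_V=\{-n,-n+2,\dots,n\}$ shows that, for $\gamma\notin\Delta_V$, the weight $2+\gamma$ can lie in $\Delta_V$ only when $\gamma=-n-2$ (then $\varphi(e)\in V(n)_{-n}=\mathbb{C}v_n$), and $-2+\gamma$ only when $\gamma=n+2$ (then $\varphi(f)\in V(n)_{n}=\mathbb{C}v_0$); these two cannot happen together. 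Consequently, for every $\gamma\neq\pm(n+2)$ all three of $\varphi(e),\varphi(f),\varphi(h)$ vanish, so $\varphi=0$ and $g=0$ immediately.

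The remaining work is to eliminate the single surviving coefficient in the two boundary cases, and here the full biderivation identity must be used, not just the representation supplied by Lemma~\ref{937}. For $\gamma=n+2$ write $\varphi(f)=c\,v_0$, so that $g(x,e)=g(x,h)=0$ and $g(x,f)=c\,xv_0$ for all $x$; feeding $(x,y,z)=(f,e,f)$ into $g(x,[y,z])=y\,g(x,z)-z\,g(x,y)$ and using $[e,f]=h$ collapses the left side to $g(f,h)=0$ and the right side to $e\,g(f,f)=c\,ev_1=cn\,v_0$, whence $cn=0$ and $c=0$ (as $n\ge 1$). The case $\gamma=-n-2$ is entirely symmetric: with $\varphi(e)=c\,v_n$ one takes $(x,y,z)=(e,e,f)$, so that $f\,g(e,e)=cn\,v_n$ forces $c=0$. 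Thus $\varphi=0$ in all cases, proving $\mathrm{Bder}(\mathfrak{sl}(2),V(n))_{\gamma}=0$.

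I expect the two boundary weights $\gamma=\pm(n+2)$ to be the only real obstacle: there the grading leaves a one-dimensional family of candidates supported on an extreme vector ($v_0$ or $v_n$), and ruling these out is genuinely a computation with the biderivation equation on a well-chosen triple rather than a matter of weight bookkeeping. Everything else is routine.
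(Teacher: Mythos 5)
Your proof is correct, but it follows a genuinely different route from the paper's. The paper splits cases according to whether $\gamma$ lies in $\Delta=\{0,\pm2\}$ and the parity of $n$: for $\gamma\notin\Delta_V\cup\Delta$ it uses \emph{both} maps furnished by Lemma \ref{937}, observing that $\psi(h)\in V(n)_\gamma=0$ forces $h\varphi(y)=y\psi(h)=0$, so $\varphi(y)\in V(n)_0\cap V(n)_{\gamma+\eta}=0$ since $\gamma+\eta\neq 0$; the leftover case ($n$ odd, $\gamma\in\Delta$) is killed by a parity count showing $\mathrm{Hom}(\mathfrak{sl}(2),V(n))_\gamma=0$ outright, and for even $n\geq 2$ one has $\Delta\subset\Delta_V$ so nothing is left. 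You instead work with the single map $\varphi$, and your weight bookkeeping (correctly) isolates $\gamma=\pm(n+2)$ as the only weights where $\varphi$ can survive, supported on $v_0$ or $v_n$; you then return to the defining identity of Definition \ref{1617} with the triples $(f,e,f)$ and $(e,e,f)$ to extract $cn=0$ and conclude $c=0$. The trade-off: the paper's two-sided trick never has to invoke the biderivation axiom beyond what Lemma \ref{937} already encodes, and its argument is uniform over all $\gamma\notin\Delta_V\cup\Delta$; your argument uses less of Lemma \ref{937} but must fall back on the biderivation identity at the two boundary weights, in exchange making completely explicit where nonzero weight-homogeneous biderivations could conceivably live. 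Both proofs are complete and of comparable length; your computations at $\gamma=\pm(n+2)$ check out ($eg(f,f)=cnv_0$ and $fg(e,e)=cnv_n$ respectively, both forced to vanish since $n\geq 1$).
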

\begin{proof}
Claim that $\mathrm{Bder}(\mathfrak{sl}(2),V(n))_{\gamma}=0$
if $\gamma \notin \Delta _V\cup \Delta$.
In fact,
Let $g \in \mathrm{Bder}(\mathfrak{sl}(2),V(n))_{\gamma}$.
Then by Lemma \ref{937}, there exist $\varphi ,\psi \in \mathrm{Hom}(\mathfrak{sl}(2),V(n))_{\gamma}$  such that
	$$g(x,y)=x\varphi (y)=y\psi  (x),\quad \forall x,y\in \mathfrak{sl}(2).$$
	Let $x=h$ and $y\in \mathfrak{sl}(2)_\eta$ with $\eta \in\Delta $.
Then
$$\varphi (y)\in V(n)_{\gamma +\eta},\quad \psi (h)\in V(n)_\gamma =0.$$
It follows that
	$$g(h,y)=h\varphi (y)=y\psi (h)=0,$$
	and then
	$\varphi (y)\in V(n)_0$.
Hence we have
	$\varphi (y)\in V(n)_{\gamma +\eta }\cap   V(n)_0.$
Note that $\gamma +\eta \neq0$, and then $\varphi (y)=0$.
Since $\eta$ and $y$ are arbitrary, so $\varphi=0$.

Below we shall prove that $\mathrm{Hom}(\mathfrak{sl}(2),V(n))_\gamma =0$
if $n$ is odd and $\gamma \in\Delta$.
Let	$\varphi \in \mathrm{Hom}(\mathfrak{sl}(2),V(n))_\gamma$.
Then
$$\varphi (e)\in V(n)_{2+\gamma },\quad
	\varphi (f)\in V(n)_{-2+\gamma },\quad
\varphi (h)\in V(n)_\gamma.$$
Since $n$ is odd and $\gamma \in\Delta$,
so $V(n)_{\pm 2+\gamma }=V(n)_\gamma =0$,
which implies $\varphi=0$.
\end{proof}

\section{Proof of the Main Results}

In this section, we shall determine all 2-local derivations
and biderivations of $\mathfrak{sl}(2)$ on $V(n)$.
\subsection{2-local derivations}
\textbf{Proof of Theorem \ref{1939}}
Let $g$ be a 2-local derivation of $\mathfrak{sl}(2)$ on $V(n)$.
In the following we shall prove that $g$ is a derivation by steps.

	\textbf{Step 1} By definition, there exists $ D_{h,e}\in \mathrm {Der}(\mathfrak{sl}(2),V(n))$,
such that
$$g(h)=D_{h,e}(h), \qquad g (e)=D_{h,e}(e),$$
that is,
	$$
		(g-D_{h,e})(h)=g (h)-D_{h,e}(h)=0,\quad
		 (g-D_{h,e})(e)=g (e)-D_{h,e}(e)=0.$$
   Note that $g_1:=g-D_{h,e}$ is also a 2-local derivation
   of $\mathfrak{sl}(2)$ on $V(n)$ and
   $g_1(h)= g_1(e)=0.$

   \textbf{Step 2}
By definition, for all $x,y\in \mathfrak{sl}(2)$,
there exists $D_{x,y}\in \mathrm {Der}(\mathfrak{sl}(2),V(n))$,
such that
$$g_1 (x)=D_{x,y}(x), \quad g_1 (y)=D_{x,y}(y).$$
By Lemma \ref{base},
$\mathrm{Ider}(\mathfrak{sl}(2),V(n))$ has a basis:
$D^{v_{0}},D^{v_{1}},\ldots,D^{v_{n}}.$
Since the first cohomology of $\mathfrak{sl}(2)$ with the coefficients  $V(n)$ is trivial (see \cite{1318}),
we have
$$\mathrm{Ider}(\mathfrak{sl}(2),V(n))=\mathrm{Der}(\mathfrak{sl}(2),V(n)).$$
That is, $\mathrm {Der}(\mathfrak{sl}(2),V(n))$
has a basis: $ D^{v_{0}},D^{v_{1}},\ldots ,D^{v_{n}}.$
Thus we may assume that
$$D_{x,y}=\sum_{i=0}^{n}a_{i}(x,y)D^{v_{i}}=D^{v_{x,y}},\quad a_{i}(x,y)\in \mathbb{F},$$
where $v_{x,y}=\sum_{i=0}^{n}a_{i}(x,y)v_{i}.$
It follows that
\begin{align*}
	 g_1 (x)&=D_{x,y}(x)=x\left(\sum_{i=0}^{n}a_{i}(x,y)v_{i}\right), \\
	 g_1 (y)&=D_{x,y}(y)=y\left(\sum_{i=0}^{n}a_{i}(x,y)v_{i}\right).
\end{align*}

\textbf{Step 3} On one hand, by Step 2 we get
\begin{align*}
	 g_1 (f)&=f\left(\sum_{i=0}^{n}a_{i}(f,e)v_{i}\right)=\sum_{i=0}^{n}a_{i}(f,e)v_{i+1}, \\
	 g_1 (e)&=e\left(\sum_{i=0}^{n}a_{i}(f,e)v_{i}\right)=\sum_{i=0}^{n}a_{i}(f,e)i(n+1-i)v_{i-1}.
\end{align*}
Since $g _{1}(e)=0 $ by Step 1, we get
  $i(n+1-i)a_{i}(f,e)=0$ for $1\leq i\leq n,$
 which implies that $ a_{i}(f,e)=0$ for $1\leq i\leq n.$
 Then
   \begin{equation}\label{2122}
   	 	g _{1}(f)= a_{0}(f,e)v_{1}.
   \end{equation}
On the other hand, by Step 2 we get
\begin{align*}
	 g_1 (f)&=f\left(\sum_{i=0}^{n}a_{i}(f,h)v_{i}\right)=\sum_{i=0}^{n}a_{i}(f,h)v_{i+1}, \\
	 g_1 (h)&=h\left(\sum_{i=0}^{n}a_{i}(f,h)v_{i}\right)=\sum_{i=0}^{n}a_{i}(f,h)(n-2i)v_{i}.
\end{align*}
Since $g_{1}(h)=0 $ by Step 1, we get
$a_{i}(f,e)(n-2i)=0$ for $0\leq i\leq n.$

If $n$ is odd, then $a_i(f,h)=0$, which implies $g_1(f)=0$.

If $n$ is even and nonzero, then $a_i(f,h)=0$ for $i\neq \frac{n}{2}$,
which implies
\begin{equation}\label{2123}
   g_{1}(f)=a_{\frac{n}{2}}(f,h)v_{\frac{n}{2}+1}.
   \end{equation}
   By Equations (\ref{2122}) and (\ref{2123}), we get
  $$
   	a_{0}(f,e)v_{1}=a_{\frac{n}{2}}(f,h)v_{\frac{n}{2}+1},\qquad \mbox{if $n$ is even and nonzero}.
  $$
When $n$ is even and nonzero,
$v_1, v_{\frac{n}{2}+1}$ are linearly independent, then $a_{0}(f,e)=a_{\frac{n}{2}}(f,h)=0 $,
which means $g_{1}(f)=0$.

If $n=0$, then $V(n)$ is a trivial $\frak{sl}(2)$-module.
It is clear that $g_{1}(f)=0$ by the definition of 2-local derivations.

In summary,  $g_{1}(f)=0$.

\textbf{Step 4}	
Let $x=x_{e}e+x_{h}h+x_{f}f \in \mathfrak{sl}(2)$,
where $x_e,x_h,x_f\in \mathbb{C}$.
By Steps 1 and 2, we get
\begin{align*}
	g_{1}(f)&=f\left(\sum_{i=0}^{n}a_{i}(x,f)v_{i}\right)=\sum_{i=0}^{n}a_{i}(x,f)v_{i+1}=0,\\
     g_{1}(e)&=e\left(\sum_{i=0}^{n}a_{i}(x,e)v_{i}\right)=\sum_{i=0}^{n}a_{i}(x,e)i(n+1-i)v_{i-1}=0.		
\end{align*}
Then $a_i(x,f)=0$ for $0 \leq i \leq n-1$ and
$a_j(x,e)=0$ for $1 \leq j \leq n$.
It follows that
\begin{align}
	g_{1}(x)&=xa_{n}(x,f)v_{n}=(x_{e}e+x_{h}h)a_{n}(x,f)v_n=a_{n}(x,f)(nx_ev_{n-1}-nx_{h}v_{n}),\label{1255}\\
	g_{1}(x)&=xa_{0}(x,e)v_{0}=(x_{h}h+x_{f}f)a_0(x,e)v_0=a_{0}(x,e)(nx_hv_0+x_fv_1).\label{1254}
\end{align}
By Equations (\ref{1255}) and (\ref{1254}), we get
$$a_{0}(x,e)(nx_hv_0+x_fv_1)=a_{n}(x,f)(nx_ev_{n-1}-nx_{h}v_{n}).$$
If $n>2$, then  $a_{0}(x,e)=0$ or $ a_{n}(x,f)=0 $, and $ g_{1}(x)=0. $

By Steps 1 and 2, we get
\begin{align*}
    g_{1}(x)&=x\left(\sum_{i=0}^{n}a_{i}(x,h)v_{i}\right)=\sum_{i=0}^{n}a_{i}(x,h)xv_i,\\
	g_{1}(h)&=h\left(\sum_{i=0}^{n}a_{i}(x,h)v_{i}\right)=\sum_{i=0}^{n}(n-2i)a_{i}(x,h)v_{i}=0.	
\end{align*}
If $n=1$, then all $a_{i}(x,h)=0$, which  implies $g_{1}(x)=0$.

Let $n=2$ in the following.
Then  $a_{i}(x,h)=0$ for $i \neq  1$.
It follows that
\begin{equation}\label{1256}
	\begin{aligned}
	g_{1}(x)&=xa_{1}(x,h)v_{1}=(x_{e}e+x_{f}f)a_{1}(x,h)v_{1} \\
	&=a_{1}(x,h)\left(2x_{e}v_{0}+x_{f}v_{2}\right).
	\end{aligned}
\end{equation}
By Equations (\ref{1256}) and (\ref{1254}), we get
$$a_{0}(x,e)(2x_hv_0+x_fv_1)=a_{1}(x,h)(2x_{e}v_{0}+x_{f}v_{2}).$$
If $x_f\neq 0$, then $a_{1}(x,h)=0$ and $a_{0}(x,e)=0$, which means $g_{1}(x)=0$.
Then by Steps (1) and (3), it is sufficient to assume $x_{f}=0, ~x_{e} x_{h}\neq 0$ in the following.
Considering Equations (\ref{1256}) and (\ref{1255}), we get
	$$	a_{2}(x,f)(2x_ev_{1}-2x_{h}v_{2})=a_{1}(x,h)(2x_{e}v_{0}+x_{f}v_{2}),$$
which means $ a_{1}(x,h)= a_{2}(x,f)=0.$
Then by Equation (\ref{1256}), we get $g_{1}(x)=0.$

In summary,  $g_{1}(x)=0$ for any $x\in \frak{sl}(2)$, that is $g_{1}=0$.

\textbf{Step 5}	
By Steps 1 and 4, we get $g=D_{h,e}$.

\subsection{Biderivations}\label{1950}
As we are known,
$V(2)$ is isomorphic to the adjoint module of $\frak{sl}(2)$
by setting $v_{0}=e$.
Then for $\lambda\in \mathbb{C}$, it is easy to verify that the  map
$$f: \mathfrak{sl}(2)\times \mathfrak{sl}(2)\longrightarrow V(2), \quad (x,y)\longmapsto\lambda [x,y]$$
is a biderivation of $\mathfrak{sl}(2)$ on $V(2)$, which is said to be inner.
Write $\mathrm{Ibder}(\mathfrak{sl}(2))$ for all inner biderivations
from  $\mathfrak{sl}(2)$ to $V(2)$.

\textbf{Proof of Theorem \ref{202202211356}}
Let $g\in \mathrm{Bder}(\mathfrak{sl}(2),V(0))$.
Note that $V(0)$ is a trivial $\mathfrak{sl}(2)$-module,
then by Definition \ref{1617},
we have
$$g([x,y],z)=g(x,[y,z])=0\qquad \forall x,y,z\in\mathfrak{sl}(2).$$
Since $[\mathfrak{sl}(2), \mathfrak{sl}(2)]=\mathfrak{sl}(2)$,
so $g=0.$

In the following, we assume that $n\neq 0$.
Let $g\in \mathrm{Bder}(\mathfrak{sl}(2),V(n))$.
Define $g_1,g_2\in \mathrm{Blin}(\mathfrak{sl}(2),V(n))$ by
$$g_1(x,y)=(g(x,y)+g(y,x))/2, \quad g_2(x,y)=(g(x,y)-g(y,x))/2,$$
where $x,y\in L$.
Then every $g_i$ is a biderivation and
$$g=g_1+g_2,\quad g_1(x,y)=g_1(y,x),\quad g_2(x,y)=-g_2(y,x),$$
where $x,y\in L$.
Schur's Lemma tells us that
every $\frak{sl}(2)$-module homomorphism
from $\frak{sl}(2)$ to $V(n)$
must be of the form $\delta_{n,2}\lambda\mathrm{id}_{\frak{sl}(2)}$,
where $\lambda\in \mathbb{C}$ and $\mathrm{id}_{\frak{sl}(2)}$
is the identity map on $\frak{sl}(2)$.
Hereafter the symbol $\delta_{i,j}$ means 1 if $i=j$, and 0 otherwise.
Then by \cite[Theorem 2.3]{zhao},
we have that $g_2(x,y)=\delta_{n,2}\lambda[x,y]$
for $\lambda\in \mathbb{C}$ and $x,y\in L$.

Below it is sufficient to show $g_1=0$.
By Lemmas \ref{1311} and  \ref{1028},
we may assume $g_1=g_{10}+g_{11}+\cdots+g_{1n}$,
where
$g_{1i}\in \mathrm{Bder}(\mathfrak{sl}(2),V(n))_{n-2i}$ for $i=0,1,\ldots,n$.
By Lemma \ref{937} there exist two maps $\varphi_{i} ,\psi_{i} \in \mathrm{Hom}(\mathfrak{sl}(2),V(n))_{n-2i},$
such that
$$
	x\psi_{i}(y)=g_{1i} (y,x)=g_{1i} (x,y)=x\varphi_{i} (y), \quad \forall x,y\in \mathfrak{sl}(2).
$$
It follows that $\psi_i=\varphi_i,$ and
    $$\varphi_{i}(h)\in V(n)_{n-2i},\quad \varphi_{i}(e)\in V(n)_{n-2(i-1)},\quad\varphi_{i}(f)\in V(n)_{n-2(i+1)},$$
    \begin{align}\label{1024}
g_{1i} (x,y)&=x\varphi_{i} (y)=y\varphi_{i} (x), \quad \forall x,y\in \mathfrak{sl}(2).
\end{align}
Then we assume that
$$\varphi_{i}(h)=a_{hi}v_{i},\quad \varphi_{i}(e)=a_{ei}v_{i-1},\quad \varphi_{i}(f)=a_{fi}v_{i+1},$$
where $a_{*},\in \mathbb{C}.$
Then by Equation (\ref{1024}), we get
$$h\varphi_{i}(e)=e\varphi_{i}(h),\quad h\varphi_{i}(f)=f\varphi_{i}(h),\quad e\varphi_{i}(f)=f\varphi_{i}(e),$$
that is,
$$(n-2i+2)a_{ei}v_{i-1}=i(n+1-i)a_{hi}v_{i-1},$$
$$(n-2i-2)a_{fi}v_{i+1}=a_{hi}v_{i+1},$$
$$(i+1)(n-i)a_{fi}v_{i}=(1-\delta_{i, 0})a_{ei}v_{i}.$$
It follows that
\begin{align}
(1-\delta_{i, n})a_{hi}&=(1-\delta_{i, n})(n-2i-2)a_{fi}\label{5-2},\\
(1-\delta_{i, 0})(n-2i+2)a_{ei}&=(1-\delta_{i, 0})i(n+1-i)a_{hi}\label{5-3},\\
 (1-\delta _{i, 0})a_{ei}&=(i+1)(n-i)a_{fi}\label{5-4}.
\end{align}
In the following, we shall determine $\varphi _{i}$ and $\psi _{i}$ for $0\leq i\leq n$ by steps.
\begin{itemize}
\item[(a)]
If $i=0 $, then
$$
a_{fi}\stackrel{(\ref{5-4})}{=}0,\quad
a_{hi}\stackrel{(\ref{5-2})}{=}(n-2)a_{fi}.
$$
It follows that $g _{10}=0.$
\item[(b)]
If $i=n$, then
$$a_{ei}\stackrel{(\ref{5-4})}{=}0,\quad (2-n)a_{ei}\stackrel{(\ref{5-3})}{=}na_{hi}.$$
It follows that $g _{1n}=0.$
\item[(c)]
If $i\neq 0$ and $i\neq n$, then by Equations (\ref{5-2})-(\ref{5-4}),
we have
$$(i+1)(n-i)(n-2i+2)a_{fi}=i(n+1-i)(n-2i-2)a_{fi},$$
which implies $a_{fi}=0$.
It follows that $a_{ei}\stackrel{(\ref{5-4})}{=}0\stackrel{(\ref{5-2})}{=}a_{hi}$.
Hence $g_{1i} =0.$
\end{itemize}	
From (a)-(c), it follows that $g_1$=0.

\end{document}